\documentclass[11pt,leqno]{amsart}

\usepackage[utf8]{inputenc}
\usepackage{amsmath, amssymb, amsthm} 
\usepackage{hyperref}                 
\usepackage{graphicx}
\usepackage[T1]{fontenc}
\usepackage{lmodern}

\newtheorem{theorem}{Theorem}[section]
\newtheorem{lemma}[theorem]{Lemma}

\newtheorem{corollary}[theorem]{Corollary}

\theoremstyle{definition}

\newtheorem{example}[theorem]{Example}
\theoremstyle{remark}
\newtheorem{remark}[theorem]{Remark}

\newcommand{\cpx}[1]{#1 \otimes _{\RR}\CC }

\numberwithin{equation}{section} 

\title[Invariant complex structures on the Euclidean motion group]{Invariant complex structures on the Euclidean motion group as deformations of Calabi-Eckmann structures}

\author{Neda Bokan}
\author{Srdjan Vukmirovi\'{c}}
\address{Faculty of Mathematics, University of Belgrade, Belgrade, Serbia}
\email{srdjan.vukmirovic@matf.bg.ac.rs}

\def\hh{\mathfrak{h}}
\def\gg{\mathfrak{g}}
\def\mm{\mathfrak{m}}
\def\bb{\mathfrak{b}}

\def\RR{\mathbb{R}}
\def\CC{\mathbb{C}}
\def\so3{\mathfrak{so}(3)}
\def\su2{\mathfrak{su}(2)}
\def\e3{\mathfrak{e}(3)}
\def\sl2c{\mathfrak{sl}(2,\CC)}
\def\aff3{\mathfrak{aff}(\RR^3)}

\subjclass[2020]{Primary 53C15; Secondary 22E15} 
\keywords{complex structures, rigid motion group, Calabi-Eckmann structures }

\begin{document}
	
	\begin{abstract}
	We classify, up to equivalence,  all left-invariant complex structures on the three-dimensional Euclidean rigid motion group $SE(3)$. Our approach is based on the correspondence between  complex structures on a real Lie algebra $\e3$ of the motion group and complex Lie subalgebras of the complexification of $\e3$.
		The resulting structures form a one-parameter family $J_\phi$. 
		
		We further establish a connection with the classical Calabi-Eckmann complex structures on $S^3\times S^3.$ More precisely, we construct a one-parameter deformation of Lie brackets connecting $\mathfrak{so}(3)\oplus\mathfrak{so}(3)$ to $\e3,$ for which each endomorphism $J_\phi$ remains integrable. Consequently, every left-invariant complex structure on $SE(3)$ arises as a contraction limit of Calabi–Eckmann structures.
		
		The same family of endomorphisms also defines  complex structures on $\mathfrak{so}(3,1).$
	\end{abstract}
	
	\maketitle

\section*{Introduction}
Lie groups and Lie algebras have played a fundamental role in mathematics for many decades, both as objects of independent study  and as essential tools in various fields, including physics and robotics. In particular, their applications to classical and relativistic mechanics have revealed interesting relationships between different symmetry groups. A well-known example is the relationship between the Galilei and Poincaré groups, reflecting the fact that classical mechanics can be regarded as a limiting case of relativistic mechanics.

These relationships motivated Inönü and Wigner \cite{wigner1953} to introduce the notion of "contraction of Lie groups and Lie algebras", providing a mathematical framework in which one symmetry group can arise as a limiting case of another. More generally, contractions and deformations of Lie algebras offer a natural way to investigate relationships between different Lie algebra structures.

An important problem in this context is the existence and classification of geometric structures on Lie groups and Lie algebras, as well as their behavior under deformations and contractions. Of particular interest are left-invariant complex structures, whose integrability depends on the underlying Lie brackets. 
Samelson showed in \cite{Samelson1953} that any even-dimensional compact real Lie algebra $\gg$ admits an invariant complex structure. D. Snow \cite{Snow1986} classified regular complex structures on reductive  Lie algebras. For four-dimensional solvable Lie algebras, the classification of complex structures is complete: J.E. Snow \cite{Snow1990}  obtained the classification in the case $\dim [\gg,\gg]\leq 2$ and G. Ovando extended this classification in \cite{Ovando} to the case  $\dim [\gg,\gg] = 3.$ When $\dim  \gg=6$ and $\gg$ is nilpotent, S. Salamon determined all such algebras admitting a complex structure and estimated the dimension of the moduli space of complex structures on $\gg$ (cf. \cite{Salamon2000}).

In his papers \cite{Sasaki1981, Sasaki1982}, T. Sasaki used the properties of root-space decomposition of the complexified Lie algebra $\gg^{\CC}$ to classify left-invariant complex structures on Lie groups $SL(3,\RR)$, $GL(2, \RR)$ and $U(2)$. A. Andrada, M.L. Barberis and I. Dotti \cite{Andrada2011} classify all abelian complex structures on six-dimensional Lie algebras.  A. Fino and F. Paradiso \cite{Fino2025} completed the classification of six-dimensional strongly unimodular almost-nilpotent Lie algebras admitting complex structures. T. \v{S}ukilovi\'{c} and S. Vukmirovi\'{c} \cite{SukVuk2023} showed the existence and uniqueness of left-invariant complex structure for cotangent bundle of Heisenberg group  $T^{*} H_{2n+1}$. Inspired by Samelson's construction \cite{Samelson1953} for even-dimensional compact Lie groups, D. N. Pham in \cite{pham2024} has shown that for any compact Lie group $G$, whether of odd or even dimension, the tangent bundle $TG$ admits a left-invariant complex structure, with the natural Lie group structure induced by $G$. More generally, $T^k G$, for $k\geq 1$ admits a left-invariant complex structure, where $T^k G: =T(T^{k-1})$ and $T^0:=G$. 
M.L. Barberis and I. Dotti in \cite{Barberis2004} showed that complex structures on the Euclidean rigid motion group $SE(3)$ and the Lorentzian motion group $E(2,1)$ arise as deformations of a regular complex structure on $SL(2,\CC)$. 

In their celebrated work \cite{calabi1953}, E. Calabi and B. Eckmann constructed invariant, non-K\"ahler complex structures on products of even number of odd-dimensional spheres.   L. Magnin in  \cite{Magnin2010} showed that Calabi-Eckmann structures are the only left-invariant complex structures on $S^3\times S^3$ when regarded as a Lie group.

In this paper  we give a complete classification of left-invariant complex structures on the Euclidean rigid motion group $SE(3).$ 
  Apart from its traditional  importance in geometry, physics and crystallography, the group $SE(3)$ has recently found numerous applications in robotics, kinematics, computer vision, biomechanics and computer graphics.
The classification results in one-parameter family $J_\phi$ of complex structures, described in Theorem \ref{th:glavna}.
Moreover, in Theorem \ref{th:calabi} we describe a close relation of the complex structures $J_\phi$  with the Calabi-Eckmann complex structures on $S^3\times S^3.$
Finaly, in Corollary \ref{cor:so31} we show that those complex  structures naturally extend to Lie algebra $\mathfrak{so}(3,1).$

The paper is organized as follows. In Section \ref{sec:prelims} we introduce the Lie algebra $\e3$  of the Euclidean rigid motion group $SE(3)$ and describe its automorphism group in Lemma  \ref{le:aut}. We also recall the notion of a complex structure on a Lie algebra and present a known example of a complex structure on $\e3$. 

Section \ref{sec:classify} deals with the complete classification of complex structures on the Lie algebra $\e3$. To obtain this classification, we use the fact that the integrability of an almost complex structure $J$ on a Lie algebra $\gg$ is equivalent to the existence of a suitable complex subalgebra $\mm _J \leq \cpx{\gg}$ of its complexification (see Lemma \ref{le:integrable}). 
In particular, complex structures on $\e3$ correspond to certain three-dimensional complex subalgebras $\mm$ of $\cpx{\e3} = \mathfrak{sl}(2,\CC) \ltimes _\rho \CC^3.$ A key step in the classification is Lemma \ref{le:dim2} where we show that $\operatorname{dim}_\CC \bb = 2,$ where $\bb$ is the natural projection of $\mm$ onto $\mathfrak{sl}(2,\CC).$ Hence, $\bb$ is a Borel subalgebra of $\mathfrak{sl}(2,\CC).$ In Theorem \ref{th:glavna} we use this fact to obtain a family $J_\phi,$ $\phi \in (-\frac{\pi}{2}, \frac{\pi}{2})$ of endomorphisms that represent all non-equivalent complex structures on $\e3 .$ At $\phi=0$, this family includes the complex structure described in \cite{Andrada2011}.

In Section \ref{sec:calabi}, Theorem \ref{th:calabi}, we prove that the complex structures $J_\phi$  on $\e3$  arise as deformations of Calabi-Eckmann structures along an Inönü-Wigner contraction  of \(\mathfrak{so}(3)\oplus \mathfrak{so}(3) \) to   \(\mathfrak{e}(3).\)

Moreover, in Corollary \ref{cor:so31} we show that the deformation of Lie brackets also yields a family of complex structures $J_\phi$ on the Lie algebra $\mathfrak{so}(3,1)$. The structure $J_0$ coincides with that found in \cite{Barberis2004}.

\section{Preliminaries}
\label{sec:prelims}

\subsection{Lie algebra $\e3$ of  Euclidean rigid motion group}

Let $\gg $ be a finite-dimensional real Lie algebra with Lie bracket
$[\cdot,\cdot]\colon \gg \times\gg \longrightarrow\gg .$ 
If $\rho\colon\gg \to\mathfrak{gl}(V)$ is a representation, the semidirect product $\gg \ltimes_\rho V$ is the Lie algebra defined on $\gg \oplus V$ by brackets
\begin{equation}
	\label{eq:semi}
[(X,u),(Y,v)]
=
\bigl([X,Y],\rho(X)v-\rho(Y)u\bigr).
\end{equation}
\noindent
The subspace $V = \{ (0,v) \,\mid \, v\in V\}$ is an abelian ideal in $\gg \ltimes_\rho V$.

The tangent Lie algebra of $\gg$ is the semidirect product $T\gg =\gg \ltimes_{\operatorname{ad}}\gg$
where $\operatorname{ad}$ denotes the adjoint representation $\operatorname{ad}_XY = [X,Y].$

The cotangent Lie algebra of $\gg$ is the semidirect product $T^{*}\gg = \gg \ltimes_{\operatorname{ad}^{*}}\gg ^{*},$  defined by means of the coadjoint representation
$ \operatorname{ad}^{*}\colon\gg \longrightarrow
\mathfrak{gl}(\gg ^{*}),$
where 
$$
\langle \operatorname{ad}^{*}_{X}\alpha,Y\rangle
= -\langle\alpha,\operatorname{ad}_XY\rangle = 
-\langle\alpha,[X,Y]\rangle, \enskip \alpha \in \gg^*, \enskip X,Y \in \gg .
$$

The Lie algebra of the affine motion group of $\RR ^n$ is semidirect product $\mathfrak{aff}(\RR^n) = \mathfrak{gl}(n, \RR)\ltimes _\rho \RR ^n$ where 
$$
\rho (A)(v) := Av, \quad A \in \mathfrak{gl}(n, \RR),\, v\in \RR ^n
$$
is the usual multiplication of a square matrix $A$ by a column vector $v$. 

We are  interested in group  of Euclidean rigid motions $SE(3) = SO(3) \ltimes _\rho \RR^3$ of three-dimensional space. Its Lie algebra  $\e3 = \mathfrak{so}(3) \ltimes _\rho \RR ^3$ is a subalgebra of   $\aff3 .$

Since Lie algebra $\mathfrak{so}(3)$ admits an $\operatorname{Ad}$-invariant inner product, it is naturally isomorphic, as an $\mathfrak{so}(3)$-module, to its dual $\mathfrak{so}(3)^*$. Moreover, $\so3$ is isomorphic to $\RR ^3$ via the cross product.
Therefore, we have natural isomorphisms of Lie algebras
\begin{equation}
	T^*\mathfrak{so}(3) \cong T\mathfrak{so}(3) \cong \mathfrak{so}(3)\ltimes _\rho \RR ^3 \cong \e3.
\end{equation}
Throughout the paper, we use the notation $\e3$ and  refer to it as the {\bf Lie algebra of the rigid motion group.}
Brackets \eqref{eq:semi} in the case of algebra $\e3$ become
\begin{equation}
	\label{eq:e3brackets}
	[(A,v), (B,u)]= ([A,B], Au - Bv), \quad A,B \in \mathfrak{so}(3),\enskip u, v\in \RR ^3.
\end{equation}

\noindent
In particular, consider the following basis of the Lie algebra $\mathfrak{so}(3)$ 

$$
E_1=
\begin{pmatrix}
	0&0&0\\
	0&0&-1\\
	0&1&0
\end{pmatrix},
\quad
E_2=
\begin{pmatrix}
	0&0&1\\
	0&0&0\\
	-1&0&0
\end{pmatrix},
\quad
E_3=
\begin{pmatrix}
	0&-1&0\\
	1&0&0\\
	0&0&0
\end{pmatrix}.
$$
\noindent
These matrices satisfy
$$
[E_1,E_2]=E_3,\qquad
[E_2,E_3]=E_1,\qquad
[E_3,E_1]=E_2.
$$
The action of $A_t = t_1 E_1 + t_2 E_2 + t_3 E_3\in \so3 $ on $v\in \RR ^3$ is given by the cross product with the vector $t = (t_1,t_2,t_3),$ i.e. $\rho(A)(v) = A_tv = t\times v.$

\noindent
We use the basis $(e_1,e_2,e_3,e_4,e_5,e_6)$ of Lie algebra $\gg  =  \e3$ defined by
\begin{eqnarray}
	\label{eq:basis}
e_1 &=& (E_1,0), \quad \hspace{2em} e_2 = (E_2,0),\quad   \hspace{2em} e_3= (E_3,0),\\
\nonumber
e_4 &=&  (0,(1,0,0)), \quad   e_5 =  (0,(0,1,0)), \quad  e_6=  (0,(0,0,1))
\end{eqnarray}
with the following nonzero brackets 
\begin{equation}
	\label{eq:komutatori}
\begin{aligned}
	[e_1,e_2]&=e_3,
	&
	[e_2,e_3]&=e_1,
	&
	[e_3,e_1]&=e_2,
	\\
	[e_1,e_5]&=e_6,
	&
	[e_1,e_6]&=-e_5,
	&
	[e_2,e_4]&=-e_6,\\
	[e_2,e_6]&=e_4,
	&
	[e_3,e_4]&=e_5,
	&
	[e_3,e_5]&=-e_4.
\end{aligned}
\end{equation}

\noindent
Since the complexification of Lie algebra $\so3 $ is $\sl2c ,$  the complexification of 
$\e3 $ is 
$$
\cpx{\e3}  =   \sl2c \ltimes _\rho \CC ^3, 
$$ 
where we use the same notation for the representation $\rho$ and its  complexification. It has the same brackets \eqref{eq:komutatori}.
Note that  
$$
\RR ^3 = \{ (0, v)\, \vert \, v\in \RR^3 \}\,\vartriangleleft  \, \e3 \enskip \mbox{and}\enskip \CC ^3 = \{ (0, v)\, \vert \, v\in \CC^3 \}\, \vartriangleleft \, \sl2c \ltimes _\rho \CC ^3
$$ 
are abelian ideals.

Recall that the group of automorphisms of Lie algebra $\gg$ is
$$
\operatorname{Aut}(\gg) = \{ F\in \mathfrak{gl}(\gg) \, \mid \, F([A,B]) = [F(A), F(B)] \enskip \mbox{for all}\enskip A, B \in \gg  \}.
$$

\begin{lemma}
	\label{le:aut}
	Every automorphism of the Lie algebra $\e3=\mathfrak{so}(3)\ltimes _\rho \mathbb R^3$ with brackets \eqref{eq:e3brackets} is a composition
	$$
	\Phi_{w,C,\lambda} = \Theta _\omega \circ R_C \circ \xi _\lambda,
	$$
	where
	\begin{align*}
		R_C(A,v) &= (CAC^{-1}, Cv), \enskip C\in SO(3),\\
		\Theta_\omega (A,v) &= (A, v-A\omega), \enskip \omega  = (\omega_1, \omega _2, \omega _3)\in \RR ^3,\\
		\xi _\lambda  (A,v) &= (A, \lambda v), \enskip \lambda \in \RR, \, \lambda \neq 0.
	\end{align*}
\end{lemma}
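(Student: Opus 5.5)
We first check that each of the three families consists of automorphisms; this is a direct computation with \eqref{eq:e3brackets}. For $R_C$ it uses $C(A\times)$-equivariance: $(CAC^{-1})(Cu)=C(Au)$, and this holds for $C\in SO(3)$ because the cross product satisfies $Ct\times Cv=C(t\times v)$. For $\xi_\lambda$ it is immediate, since both sides of the bracket are linear in the $\RR^3$-component. For $\Theta_\omega$ we compute
$$[\Theta_\omega(A,v),\Theta_\omega(B,u)]=([A,B],\,A(u-B\omega)-B(v-A\omega)),$$
and the right-hand side equals $([A,B],Au-Bv-[A,B]\omega)=\Theta_\omega([A,B],Au-Bv)$.

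For the converse, let $F\in\operatorname{Aut}(\e3)$.

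The first step is to show that $F$ preserves the ideal $\RR^3$. The ideal $\RR^3$ is the radical of $\e3$: it is an abelian ideal, and the quotient $\e3/\RR^3\cong\so3$ is simple. Since the radical is characteristic, $F(\RR^3)=\RR^3$. Hence $F$ has block form
$$F(A,0)=(\varphi(A),\tau(A)),\qquad F(0,v)=(0,Mv),$$
where $\varphi$ is the automorphism of $\so3\cong\e3/\RR^3$ induced by $F$.

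Every automorphism of $\so3$ is of the form $A\mapsto CAC^{-1}$ with $C\in SO(3)$. This holds because $\operatorname{Aut}(\so3)$ preserves the Killing form, so it lies in $O(3)$ in the basis $E_i$. It must also preserve the cross product, which rules out $\det=-1$. So $\varphi(A)=CAC^{-1}$, and after composing with $R_C^{-1}$ we may assume $\varphi=\mathrm{id}$.

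Next, the bracket relation $F[(A,0),(0,v)]=[F(A,0),F(0,v)]$ gives $MAv=AMv$ for all $A\in\so3$. Thus $M$ commutes with all of $\so3$, and since $\RR^3$ is an irreducible, absolutely irreducible $\so3$-module, Schur's lemma gives $M=\lambda I$ with $\lambda\neq0$. Composing with $\xi_\lambda^{-1}$, which commutes with the identity on $\so3$, we may assume $M=I$.

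Finally, the bracket relation on two elements of $\so3$ reads $\tau([A,B])=A\tau(B)-B\tau(A)$, so $\tau$ is a $1$-cocycle of $\so3$ with values in $\RR^3$. By Whitehead's first lemma, $H^1(\so3,\RR^3)=0$, so $\tau(A)=-A\omega$ for some $\omega\in\RR^3$. This means $F=\Theta_\omega$.

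Collecting the reductions in the right order, $F=R_C\circ\xi_\lambda\circ\Theta_{\omega'}$ for some $\omega'$. To bring this into the stated form $\Theta_\omega\circ R_C\circ\xi_\lambda$, we use the commutation rules $\xi_\lambda\Theta_{\omega'}=\Theta_{\lambda\omega'}\xi_\lambda$ and $R_C\Theta_{\omega'}=\Theta_{C\omega'}R_C$. The second rule follows from $C A\omega'=(CAC^{-1})C\omega'$.

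The step we expect to need the most care is the cocycle step. To avoid invoking Whitehead's lemma, one can instead argue concretely. Identify $\tau$ with a $3\times3$ matrix $T$ via $A_t\mapsto Tt$; then the cocycle condition becomes $T(s\times t)=s\times Tt-t\times Ts$. Testing this on basis vectors forces $T$ to be skew-symmetric, so $Tt=\omega\times t=-t\times\omega$, which is exactly $\tau(A)=-A\omega$.
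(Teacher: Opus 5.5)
Your proposal is correct and follows essentially the same route as the paper: invariance of the ideal $\mathbb{R}^3$, the block form, inner automorphisms of $\mathfrak{so}(3)$, a Schur-type argument forcing the $\mathbb{R}^3$-block to be $\lambda C$, and the cocycle/Whitehead step for the off-diagonal part. The differences are bookkeeping and extra justification. You derive the invariance of $\mathbb{R}^3$ from its being the radical, and you normalize by composing with $R_C^{-1}$ and $\xi_\lambda^{-1}$ before reordering via the commutation rules; the paper instead carries $C$ and $\lambda$ along through $D=C^{-1}M$ and $\gamma(A)=\beta(C^{-1}AC)$.
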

\begin{proof}
	The ideal $\RR ^3 \vartriangleleft \e3$ is  invariant under every automorphism. Hence any
	\(\Phi\in\operatorname{Aut}(\mathfrak{se}(3))\) has the form
	\[
	\Phi(A,v)=\bigl(\alpha(A),\beta(A)+Mv\bigr),
	\]
	where
	\[
	\alpha:\mathfrak{so}(3)\to\mathfrak{so}(3),\qquad
	\beta:\mathfrak{so}(3)\to\mathbb R^3,\qquad
	M:\mathbb R^3\to\mathbb R^3
	\]
	are linear, with \(\alpha\) and \(M\) invertible.
	
	Since
	$[(A,0),(B,0)]=([A,B],0),$
	comparison of the first components gives
	$\alpha([A,B])=[\alpha(A),\alpha(B)]. $
	Thus \(\alpha\) is an automorphism of \(\mathfrak{so}(3)\). Every automorphism of
	\(\mathfrak{so}(3)\) is inner, so there exists \(C\in SO(3)\) such that
	\begin{equation}
		\label{eq:alpha}
		\alpha(A)=CAC^{-1}.
	\end{equation}
	
	Next, by aplying $\Phi$ to the relation
	$[(A,0),(0,v)]=(0,Av),$
	we obtain
	$$
	MAv=\alpha(A)\, Mv
	=CAC^{-1}Mv.
	$$
	Denote $D=C^{-1}M,$ so we obtain $D Av =A Dv $
	for every \(A\in\mathfrak{so}(3)\) and \(v\in\mathbb R^3\). Thus \(D\) commutes with every skew-symmetric \(3\times3\) matrix. An elementary matrix computation shows that the only such matrices are scalar matrices $D = \lambda E,$ i.e.
	$$
	M=\lambda C,\enskip \lambda\ne0. 
	$$
	
	To determine \(\beta\), from
	$\Phi([(A,0),(B,0)]) = [\Phi(A,0),\Phi(B,0)]$
	we obtain
	\begin{equation}
		\label{eq:beta1}
		\beta([A,B]) = \alpha(A)\beta(B)-\alpha(B)\beta(A).
	\end{equation}
	
	\noindent
	Define
	$\gamma(A)=\beta(C^{-1}AC)$ and 
	\eqref{eq:beta1} becomes
	\begin{equation}
		\label{eq:cocycle}
		\gamma([A,B])
		=
		A\gamma(B)-B\gamma(A),
		\enskip A,B\in\mathfrak{so}(3).
	\end{equation}
	
	\noindent
	One can check directly  that from \eqref{eq:cocycle}  it follows the existence of  $w\in \RR ^3$ such that 
	$\gamma(A)=-Aw$
	for every \(A\in\mathfrak{so}(3)\). This also may follow from vanishing of the first cohomology of $\mathfrak{so}(3)$ and the Whitehead's first lemma.
	We obtain
	\[
	\beta(A)=-(CAC^{-1})w.
	\]
	
	\noindent
	Consequently,
	\[
	\Phi(A,v)
	=
	\left(
	CAC^{-1},
	\lambda Cv-(CAC^{-1})w
	\right),
	\]
	so every $\Phi \in Aut (\e3)$ is a composition of $\Theta _\omega,$ $R_C,$ and $\xi _\lambda,$ defined in the statement of the lemma.
	Conversely, a direct computation shows that $\Theta _\omega,$ $R_C,$ and $\xi _\lambda$ are automorphisms of $\e3 .$
\end{proof}

\subsection{Complex structures on Lie algebras}

Let $G$ be a real Lie group of even dimension $2n$ and $\gg $ its  Lie algebra. An almost complex structure on $G$ is an endomorphism $J:TG\to TG$ of the tangent bundle satisfying  
$$
J^{2}=-\operatorname{Id}.
$$
From the Newlander-Nirenberg  theorem $J$ arises from a complex structure on $G$ if and only if  Nijenhuis tensor
$$
N_J(X,Y) :=[JX,JY]-[X,Y]-J[JX,Y]-J[X,JY],
$$
vanishes for all vector fields $X,Y \in \mathfrak{X} (G).$ An almost complex structure is left-invariant if it commutes with differentials of left translations on $G.$

Therefore left-invariant complex structure on Lie group $G$ is described by complex structure on its Lie algebra $\gg,$ i.e. endomorphism   
$J\colon\gg \longrightarrow\gg $
satisfying

$$
J^2=-\operatorname{Id}_{\gg }
\qquad\text{and}\qquad
N_J(X,Y)=0
$$
for all $X,Y\in\gg $.

Equivalently, one may consider the complexification $\cpx{\gg}$  of the Lie algebra $\gg .$
Almost complex structure $J$ extends complex-linearly to $\cpx{\gg}$. Its $i$-eigenspace is
\begin{equation}
	\label{eq:mj}
\mm _J
=
\{X-iJX \,\vert\, X\in\gg \}.
\end{equation}

This is a complex vector subspace of $\gg _{\CC }$ of complex dimension $n$. The $-i$-eigenspace is its complex conjugate
\begin{equation}
	\label{eq:barmj}
\overline{\mm }_J
=
\{X+iJX \,\vert\, X\in\gg \}.
\end{equation}
We see that  $\gg _{\CC }=\mm _J\oplus\overline{\mm }_J$ and in particular $\mm _J\cap\overline{\mm }_J=\{0\}.$ 

\noindent
The proof of the following, well known fact, can be found in \cite{Nomitzu1969}.
\begin{lemma}
	\label{le:integrable}
Complex  structures  on Lie algebra $\gg $ are in one-to-one corespodence with complex subalgebras  $\mm \leq \cpx{\gg} $
such that $\cpx{\gg}=\mm \oplus\overline{\mm }.$
\end{lemma}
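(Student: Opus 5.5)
The plan is to construct the two maps of the correspondence explicitly and check that they are mutually inverse. The key tool is the identity
\[
4N_J(X,Y)\text{-type expansion: } [X-iJX,\,Y-iJY]=\bigl([X,Y]-[JX,JY]\bigr)-i\bigl([JX,Y]+[X,JY]\bigr).
\]

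First, given a complex structure $J$ on $\gg$, I take $\mm=\mm_J$ as in \eqref{eq:mj}. Since $J$ is real, complex conjugation on $\cpx{\gg}$ sends the $i$-eigenspace to the $-i$-eigenspace. Hence $\overline{\mm}_J$ is given by \eqref{eq:barmj}, and $\cpx{\gg}=\mm_J\oplus\overline{\mm}_J$ because $J^2=-\operatorname{Id}$ makes $J$ diagonalizable with eigenvalues $\pm i$.

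It remains to show that $\mm_J$ is a subalgebra. Apply $J$ to the real part $[X,Y]-[JX,JY]$ of the bracket above. The bracket lies in $\mm_J$ if and only if $-J\bigl([X,Y]-[JX,JY]\bigr)=-([JX,Y]+[X,JY])$. Rearranged, this is exactly $JN_J(X,Y)=0$, equivalently $N_J(X,Y)=0$. So $\mm_J$ is closed under brackets precisely when $J$ is integrable. Closure on all of $\mm_J$ follows from closure on the generators $X-iJX$ by complex bilinearity.

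Conversely, let $\mm\le\cpx{\gg}$ be a complex subalgebra with $\cpx{\gg}=\mm\oplus\overline{\mm}$. I define $J$ on $\cpx{\gg}$ as multiplication by $i$ on $\mm$ and by $-i$ on $\overline{\mm}$. Then $J$ commutes with conjugation, since conjugation swaps the two summands and $\overline{iz}=-i\bar z$. Hence $J$ preserves the real form $\gg$ and restricts to a real endomorphism with $J^2=-\operatorname{Id}$. Its $i$-eigenspace is $\mm$, so $\mm=\mm_J$. The computation of the previous step then gives $N_J=0$.

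Finally, the two assignments are inverse to each other, because $J$ is determined by its eigenspaces and $\mm_J$ is determined by $J$.

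The only delicate step is the real-form argument: showing that $J$ maps $\gg$ into $\gg$ and that $\mm_J$ coincides with $\{X-iJX\}$. I would handle it by writing $z\in\cpx{\gg}$ as $z=m+\bar m'$ and using $\overline{Jz}=J\bar z$. The rest is the routine bracket expansion above.
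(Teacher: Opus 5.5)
Your proof is correct. The expansion of $[X-iJX,\,Y-iJY]$ shows that the bracket lies in $\mm_J$ exactly when $JN_J(X,Y)=0$, and for the converse, defining $J$ as $\pm i$ on $\mm$, $\overline{\mm}$ and checking that it commutes with conjugation works. The paper gives no proof of its own and defers to the reference it cites, where the lemma is the standard textbook fact established by this argument, so your approach is essentially the same.
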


Consequently, the classification of left-invariant complex  structures on a real Lie algebra $\gg $ can be reformulated as the classification of complex Lie subalgebras $\mm \leq \gg _{\CC }$
\noindent
such that
$$
\dim_{\CC }\mm =\frac12\dim_{\RR }\gg ,
\qquad
\gg _{\CC }
=
\mm \oplus\overline{\mm }.
$$
From a basis of the subalgebra $\mm$ and its conjugate $\overline{\mm}$ the structure $J$ is easily reconstructed.

\begin{example}
	\label{ex:complex}
	In \cite{ovando2012} it was shown that $\e3$ admits a complex  structure that is in basis \eqref{eq:komutatori} given by
	\begin{equation}
		\label{eq:Jovando}
	Je_1 = -e_2, \, Je_2 = e_1,\, Je_3 = e_6,\, Je_6 = -e_3, Je_4 = -e_5,\, Je_5 = e_4.
	\end{equation}
	The corresponding subalgebra is
	$$
	\mm _J = \operatorname{span}_\CC (f_1 = e_1 + ie_2,\, f_2 = e_5 - ie_4,\, f_3 = e_6 + i e_3).
	$$
	The commutators 
	$$
	[f_1, f_2]=0, \quad [f_1, f_3] = -f_1 - f_2, \quad [f_2, f_3]=-f_2
	$$
	belong to $\mm$, so it is a complex subalgebra. 
\end{example}

\noindent
Two complex structures $J_1, J_2$ on Lie algebra $\gg$ are regarded as equivalent if
\begin{itemize}
	\item $J_1 = F^{-1}J_2F$,  $F\in \operatorname{Aut} (\gg ),$ where all matrices are represented in the same basis of algebra $\gg ,$
	
	\centerline{or equivalently,}
	\item corresponding subalgebras $\mm _{J_1}, \mm _{J_2}\leq \cpx{\gg} $ are related by the complexification $F_\CC : \cpx{\gg} \to \cpx{\gg}$ of a real  automorphism $F\in Aut (\gg ),$ i.e. $F_\CC (\mm _{J_1})  = \mm _{J_2}.$
	
\end{itemize}

	\section{Classification of complex structures on Lie algebra $\e3$}
	\label{sec:classify}
	
Classification of complex structures on  $\e3$ is according to Lemma \ref{le:integrable} equivalent to classification of $3$-dimensional complex subalgebras 
$$
\mm  \leq \cpx{\e3}   = \sl2c \ltimes _\rho  \CC ^3, \quad\mm \cap \overline{\mm } = \{ 0\}.
$$ 
Note that natural projection
$$
\pi : \sl2c \ltimes _\rho \CC ^3 \to \sl2c , \quad \pi(A,v):= A,
$$	
is epimorphism of Lie algebras with kernel $\operatorname{Ker}(\pi) = \CC ^3$ and image $\operatorname{Im}(\pi) = sl(2, \CC ).$

\noindent
Denote by $\hh  := \mm  \cap \CC ^3 $ and $\bb := \pi (\mm )$. 
\begin{lemma}
	\label{le:dimensions}
 $\bb  \leq \sl2c$ is subalgebra  and $\hh \vartriangleleft \mm $ is ideal.
\end{lemma}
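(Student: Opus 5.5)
Both claims follow from elementary facts about homomorphisms and intersections of subalgebras; nothing specific to $\e3$ is needed beyond the facts that $\pi$ is a Lie algebra epimorphism and that $\CC^3 = \operatorname{Ker}(\pi)$ is an ideal of $\cpx{\e3}$.

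For the first claim, I will use that the image of a subalgebra under a Lie algebra homomorphism is a subalgebra. Since $\mm$ is a complex subspace and $\pi$ is $\CC$-linear, $\bb = \pi(\mm)$ is a complex subspace of $\sl2c$. Given $A = \pi(X)$ and $B = \pi(Y)$ with $X, Y \in \mm$, the homomorphism property gives $[A,B] = [\pi(X),\pi(Y)] = \pi([X,Y])$. Because $\mm$ is a subalgebra, $[X,Y] \in \mm$, so $[A,B] \in \bb$. Concretely, with $X = (A,u)$ and $Y = (B,v)$, the bracket \eqref{eq:semi} shows that the first component of $[X,Y]$ is $[A,B]$.

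For the second claim, I will note that $\hh = \mm \cap \CC^3$ is an intersection of complex subspaces, hence a complex subspace of $\mm$. Take $X \in \mm$ and $h \in \hh$. Since $h \in \mm$ and $\mm$ is a subalgebra, $[X,h] \in \mm$. Since $h \in \CC^3$ and $\CC^3 = \operatorname{Ker}(\pi)$ is an ideal of $\sl2c \ltimes_\rho \CC^3$, also $[X,h] \in \CC^3$. Explicitly, for $X = (A,u)$ and $h = (0,v)$, \eqref{eq:semi} gives $[X,h] = (0, \rho(A)v)$. Hence $[X,h] \in \mm \cap \CC^3 = \hh$, so $\hh$ is an ideal of $\mm$.

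There is no real obstacle here. The only point requiring care is that $\CC^3$ is an ideal of the complexification and not merely a subspace, and this was already recorded after \eqref{eq:komutatori}.
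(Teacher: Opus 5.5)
Your proof is correct and matches the paper's argument: $\bb$ is a subalgebra because it is the image of $\mm$ under the homomorphism $\pi$. For the ideal claim, $[\mm,\hh]$ lies both in $\mm$ (since $\mm$ is a subalgebra) and in $\CC^3$ (since $[(A,u),(0,v)]=(0,Av)$), hence in $\hh=\mm\cap\CC^3$.
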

\begin{proof}
	The first statement follows since $\bb$ is image of subalgebra $\mm$ under homomorphism.	
For the second statement
\begin{align*}
 [\mm , \hh] &= \{ [(A, v), (0, u)]\, \mid \,   v,u \in \hh , \, A \in \bb \} = \{ (0, Au)\}\subset \CC ^3,\\
 [\mm , \hh] &\subset  [\mm, \mm] \subset  \mm 
\end{align*}
and therefore $[\mm , \hh] \subset \mm \cap \CC^3 = \hh .$
\end{proof}

In the Example \ref{ex:complex} we see that  $\bb = \operatorname{span} _\CC (e_1 + ie_2, ie_3)\leq \sl2c $ and $\operatorname{dim}_\CC \bb = 2.$ Also $\hh =  \operatorname{span} _\CC (-ie_4 + e_5)$ and  $\operatorname{dim}_\CC \hh = 1.$ Now we show that this must hold in general.

\begin{lemma}
	\label{le:dim2}
	If $\mm \leq \cpx{\e3}  = \sl2c \ltimes _\rho \CC^3$ is $3$-dimensional complex subalgebra that corresponds to a complex structure on $\gg = \e3,$ then $\operatorname{dim}_\CC \bb = 2.$
\end{lemma}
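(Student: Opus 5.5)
We use $\dim_\CC \mm = \dim_\CC \bb + \dim_\CC \hh = 3$, which follows from the exact sequence $0\to \hh \to \mm \xrightarrow{\pi} \bb \to 0$. We then exclude each value $\dim_\CC\bb\in\{0,1,3\}$ in turn. The key inputs are the condition $\mm\cap\overline{\mm}=\{0\}$, which forces $\hh\cap\overline{\hh}=\{0\}$, and the ideal property of $\hh$ from Lemma \ref{le:dimensions}. Note that $\pi$ commutes with conjugation, so $\pi(\overline{\mm})=\overline{\bb}$, and that $\overline{\hh}=\overline{\mm}\cap\CC^3$.

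\emph{Case $\dim\bb=0$.} Then $\mm=\hh\subset\CC^3$ and $\dim\hh=3$. This gives $\hh=\CC^3=\overline{\hh}$, contradicting $\mm\cap\overline{\mm}=\{0\}$.

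\emph{Case $\dim\bb=1$.} Then $\dim\hh=2$. Since $\hh\cap\overline{\hh}=\{0\}$ would force $\dim(\hh+\overline{\hh})=4>3$, the two planes $\hh,\overline{\hh}\subset\CC^3$ must intersect nontrivially. So there is $0\neq u\in\hh\cap\overline{\hh}\subset\mm\cap\overline{\mm}$, a contradiction.

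\emph{Case $\dim\bb=3$.} Then $\bb=\sl2c$ and $\hh=0$, so $\pi|_\mm$ is an isomorphism onto $\sl2c$. Hence $\mm=\{(A,\beta(A))\}$ for a linear map $\beta:\sl2c\to\CC^3$. Closure of $\mm$ under brackets gives exactly the cocycle identity \eqref{eq:cocycle} (complexified). As in the proof of Lemma \ref{le:aut}, Whitehead's first lemma then yields $w\in\CC^3$ with $\beta(A)=-Aw$. Thus $\mm=\Theta_w(\sl2c)$, where $\Theta_w(A,v)=(A,v-Aw)$ is a complex automorphism.

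The main obstacle is to contradict $\mm\cap\overline{\mm}=0$ in this case. We have $\overline{\mm}=\{(A,-A\bar w)\}$. A common element satisfies $A w=A\bar w$, i.e.\ $A(w-\bar w)=0$. Writing $d=w-\bar w\in\CC^3$, we must exhibit a nonzero $A\in\sl2c\cong\so3\otimes\CC$ with $A d=t\times d=0$. We can take $t=d$ if $d\neq 0$, since $d\times d=0$. If $d=0$, then any $A\neq 0$ works. In either case $0\neq(A,-Aw)\in\mm\cap\overline{\mm}$, which is a contradiction.

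Hence $\dim_\CC\bb=2$. As a consistency check, in that case $\dim\hh=1$ and the conditions are not automatically violated, which matches Example \ref{ex:complex}.
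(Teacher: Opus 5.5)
Your proposal is correct and follows the paper's proof essentially step by step: the same three-case exclusion, using $\mm=\overline{\mm}=\CC^3$ for $\dim_\CC\bb=0$, a dimension count of two planes in $\CC^3$ for $\dim_\CC\bb=1$, and Whitehead's first lemma reducing the case $\dim_\CC\bb=3$ to the existence of $A\neq 0$ with $A(w-\bar w)=0$. Your explicit choice $A=A_d$ with $d\times d=0$ just makes concrete the existence claim that the paper states without proof.
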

	\begin{proof}
	i)	If $\operatorname{dim}_\CC \bb = 0$ then $\mm = \operatorname{Ker} (\pi) = \CC ^3,$ but since $\overline{\mm} = \CC^3$ and  $\mm \cap \overline{\mm} = \CC ^3 \neq \{ 0\}$ the subalgebra $\mm$ can't correspond to a complex structure.
		
	ii) Suppose that 	 $\operatorname{dim}_\CC \bb = 3.$ Then $\bb = \sl2c $ and we have well defined map $L: \sl2c  \to \CC^3$ such that 
	$$
	\mm = \{(A, L(A))\, \mid \, A\in \sl2c \}.
	$$
	From linearity of $\pi$ it follows that $L$ is linear. Moreover
	$$
	[(A, L(A)),(B, L(B)) ] = ([A,B], AL(B)-BL(A)) \in \mm,
	$$
	and therefore $L([A,B]) = AL(B) - BL(A)$ so $L$ is a derivation, i.e $L$ is $1$-cocycle.  Since $\sl2c $ is semisimple, from the Whitehead's first lemma it follows that the first cohomology $H^1(\sl2c )$ is trivial. Therefore, the derivation $L$ is inner, that is, there exists $v\in \CC ^3$ such that $L(A) = Av, A\in \sl2c .$ We conclude that algebra $\mm$ and its conjugate are  described by $v\in \CC ^3:$ 
	\begin{align*}
		\mm  = \mm _v & = \{ (A, Av)\, \mid \, A \in \sl2c \},\\
		\overline{\mm} = \mm _{\overline{v}} & = \{ (A, A\overline{v})\, \mid \, A \in \sl2c \}.
	\end{align*} 
Let  $x\in \mm \cap \overline{\mm}.$ The condition $x\neq 0$  is equivalent to existence of  $A\neq 0$ such that 
$$
(A, Av) = x = (A, A\overline{v}) \quad \Leftrightarrow\quad A(v-\overline{v}) = 0.
$$
But, for each $v\in \CC^3$, or equivalently for each subalgebra $\mm = \mm _v,$  there exists $A\in \sl2c \cong so(3,\CC)$ such that $A(v-\overline{v}) = 0,$ i.e. there exists $0\neq x = (A, Av)\in \mm\cap \overline{\mm}.$ Therefore, $\mm$ can't correspond to a complex structure. 

iii) Suppose that 	 $\operatorname{dim}_\CC (\bb) = 1,$ i.e. $\bb = \CC A, \, A\in \sl2c .$ Since $\hh \subset \CC ^3$ is ideal in $\mm,$ it is invariant under the action of  $A\in \sl2c \cong so(3,\CC).$ From  $\mm = \CC (A,0) \oplus \hh$ 
we obtain  $\overline{\mm} = \CC (\overline{A},0) \oplus \overline{\hh}.$ Since $\operatorname{dim}_\CC \hh = 2 = \operatorname{dim}_\CC \overline{\hh}$ and  $\hh, \overline{\hh} \subset \CC ^3$ we conclude that $\operatorname{dim}_\CC \hh \cap \overline{\hh} \geq 1,$ so there exists $v \in \hh \cap \overline{\hh},\, v\neq 0$ and therefore $v\in \mm \cap \overline{\mm}.$ Therefore, $\mm$ can't correspond to a complex structure. 
	\end{proof}
	
\begin{theorem}
	\label{th:glavna}
	Let \eqref{eq:basis} be a basis of $\e3$ with brackets \eqref{eq:komutatori}.
	
	\noindent
a) Non-isomorphic subalgebras of $\cpx{\e3} $ associated to  complex  structure are
\begin{equation}
	\label{eq:mphi}
	\mm _\phi = \operatorname{span}_\CC (e_1 + i e_2, -ie_4+ e_5,  i e_3 + e^{i\phi} e_6), \enskip \phi \in (-\frac{\pi}{2}, \frac{\pi}{2}). 
\end{equation}

\noindent
b) The corresponding complex structures are represented by matrices
	\begin{equation}
		\label{eq:jphi}
	J_\phi  = \begin{pmatrix}
		0 & 1 & 0& 0& 0& 0\\
		-1& 0& 0& 0& 0& 0\\
		0& 0& \frac{\sin \phi}{\cos \phi}& 0& 0& -\frac{1}{\cos \phi}\\
		0& 0& 0& 0& 1& 0\\
		0& 0& 0& -1& 0& 0\\
		0& 0& \frac{1}{\cos \phi}& 0& 0& -\frac{\sin \phi}{\cos \phi}
	\end{pmatrix}, \quad \phi\in (-\frac{\pi}{2}, \frac{\pi}{2}).
\end{equation}

\end{theorem}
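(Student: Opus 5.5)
By Lemma \ref{le:dim2}, $\bb=\pi(\mm)$ is a two-dimensional subalgebra of $\sl2c\cong\mathfrak{so}(3,\CC)$, hence a Borel subalgebra. Every Borel subalgebra of $\mathfrak{so}(3,\CC)$ is the stabilizer of an isotropic line $\CC q\subset\CC^3$, where $q\cdot q=0$ for the complex bilinear dot product. I will first normalize $\bb$ using real automorphisms. Write $q=a+ib$ with $a,b\in\RR^3$. The isotropy condition gives $|a|=|b|$ and $a\perp b$. We have $b\ne0$, since otherwise $q$ would be a real isotropic vector and hence zero. Therefore some $R_C$, $C\in SO(3)$, combined if needed with complex rescaling of $q$, maps $q$ to $(1,i,0)$ or $(1,-i,0)$. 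Conjugating by $C=\mathrm{diag}(1,-1,-1)$ swaps these two cases. So I may assume $q=(1,i,0)$, identified with $e_4+ie_5$. Since $\mathrm{ad}$ on $\mathfrak{so}(3)$ corresponds to the cross product, $\bb=\operatorname{span}_\CC(e_1+ie_2,\,e_3)$.

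Next I will determine $\hh=\mm\cap\CC^3$. Since $\dim\mm=3$ and $\dim\bb=2$, we get $\dim\hh=1$. By Lemma \ref{le:dimensions}, $\hh$ is $\bb$-invariant, so $\hh$ is spanned by a common eigenvector of $\bb$ acting on $\CC^3$. This eigenvector is the isotropic line fixed by the Borel subalgebra, so $\hh=\CC(e_4+ie_5)=\CC(-ie_4+e_5)$. Now pick lifts: $\mm$ is spanned by $-ie_4+e_5$, by $e_1+ie_2+u$, and by $ie_3+w$, where $u,w\in\CC^3$ are determined modulo $\hh$. I will use $\Theta_\omega$ to kill the cocycle part. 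The map $A\mapsto$ (translation part) defines a relative $1$-cocycle $\bb\to\CC^3/\hh$. The automorphism $\Theta_\omega$ changes it by the coboundary $A\mapsto -A\omega$, but only for real $\omega$. Real $\omega$ can only remove "half" of the cocycle, and that is where the parameter comes from. Concretely, I will impose the bracket conditions $[e_1+ie_2+u,\,ie_3+w]\in\mm$ and $[\,\cdot\,,\hh]\subset\hh$. These should force $u\in\hh$ after a suitable choice of $\omega$, and $w\equiv c\,e_6 \pmod{\hh}$ plus terms that can be absorbed.

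The remaining freedom is $\xi_\lambda$ with $\lambda\in\RR^*$, together with rotations about the $e_3$ axis and the residual $\Theta_\omega$ with $\omega$ along $e_3$. These act on $c\in\CC$ by real rescaling, which leaves only $\arg c$, i.e. $c\sim e^{i\phi}$. The sign flip $\lambda=-1$ identifies $\phi$ with $\phi+\pi$, and the condition $\mm\cap\overline\mm=0$ excludes $c$ real up to the relevant normalization. Checking precisely which values are excluded is the main obstacle: I must verify that the admissible range is exactly $\phi\in(-\pi/2,\pi/2)$ and that distinct $\phi$ are inequivalent. For the nondegeneracy I will compute the determinant of the six vectors consisting of the basis of $\mm_\phi$ and their conjugates. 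The block involving $e_3,e_6$ has determinant $\propto e^{i\phi}+e^{-i\phi}=2\cos\phi$, so it is nonzero iff $\cos\phi\ne0$.

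For inequivalence, the stabilizer of the normal form in $\operatorname{Aut}(\e3)$ must preserve $\bb$ and $\hh$. By Lemma \ref{le:aut}, this forces $C$ to be a rotation about $e_3$ or the flip, $\omega\parallel e_3$, and $\lambda$ real. I will check that these change $e^{i\phi}$ only by a real positive factor, with the flip combined with $\lambda<0$ returning the same $\phi$. Hence $\phi$ is an invariant.

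Part b) then follows by linear algebra. Write $f_3=ie_3+e^{i\phi}e_6$. Solving $J(X)=-\operatorname{Im}$-part relations from $\mm_\phi=\{X-iJX\}$ in the real span of $e_3,e_6$ gives
\[
Je_3=\tan\phi\, e_3+\sec\phi\, e_6,\qquad Je_6=-\sec\phi\, e_3-\tan\phi\, e_6.
\]
On the other two blocks, $e_1+ie_2$ and $e_5-ie_4$ give $Je_1=-e_2$ and $Je_4=-e_5$, matching the columns of \eqref{eq:jphi}. Integrability follows from Lemma \ref{le:integrable}, once $[f_1,f_3]$ and $[f_2,f_3]$ are checked to lie in $\mm_\phi$ using \eqref{eq:komutatori}.
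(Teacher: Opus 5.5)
Your outline follows the paper's route: Borel $\bb$, $\hh$ the $\bb$-stable line, lifts, the bracket condition, normalization by $\Theta_\omega$ and $\xi_\lambda$, the $\cos\phi\neq0$ test, and inequivalence. Your normalization of $\bb$ via isotropic lines and real rotations is more careful than the paper's appeal to inner automorphisms of $\mathfrak{sl}(2,\CC)$, which need not come from automorphisms of $\mathfrak{e}(3)$.

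\textbf{The gap.} The step that carries the content is only asserted: you say the bracket conditions ``should force'' $u\in\hh$ and $w\equiv c\,e_6$. The mechanism you offer for it is also wrong.

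Put $H=ie_3$, $E=e_1+ie_2$, $v_\pm=(\mp i,1,0)$ and $v_0=(0,0,1)$, so that $(0,v_+)=-ie_4+e_5$ and $(0,v_0)=e_6$. Write $w=xv_++yv_-+zv_0$ and $u=pv_++qv_-+rv_0$.
\begin{itemize}
\item The condition $[(H,w),(E,u)]=(E,Hu-Ew)\in\mm$ means $Hu-Ew=u+\kappa v_+$ for some $\kappa$. This gives $q=0$, $r=-2y$ and $z=\kappa$.
\item We have $H\omega=\tfrac12(i\omega_1+\omega_2)v_++\tfrac12(i\omega_1-\omega_2)v_-$ and $E\omega=-\omega_3v_++(\omega_2-i\omega_1)v_0$. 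Choose a real $\omega$ with $\tfrac12(i\omega_1-\omega_2)=y$; this is always solvable, since this map $\RR^2\to\CC$ is onto.
\item This $\omega$ kills the $v_-$-part of $w$. Precisely because $r=-2y$, it kills the $v_0$-part of $u$ at the same time. The relation $r=-2y$ is the fact your sketch is missing.
\end{itemize}
Restricting to real $\omega$ therefore costs nothing: real $\omega$ already realize every coboundary $\bb\to\CC^3/\hh$. The surviving $z$ represents a class in $H^1(\bb,\CC^3/\hh)\cong\CC$. No $\Theta_\omega$ can change it, even for complex $\omega$, because $H\omega=i\,e_3\times\omega$ has no $v_0$-component. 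The modulus $\phi=\arg z$ survives because $\lambda$ in $\xi_\lambda$ is real, not because $\omega$ is. Your ``half of the cocycle'' explanation puts it in the wrong place.

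\textbf{Two smaller corrections.}
\begin{itemize}
\item The condition $\mm\cap\overline{\mm}=0$ excludes $\operatorname{Re}z=0$, not $z$ real. This includes $z=0$, which you must discard before writing $z\sim e^{i\phi}$. Real $z$ is $\phi=0$, the structure of Example \ref{ex:complex}. Your determinant on the $(e_3,e_6)$-block is $2i\operatorname{Re}z$ for general $z$; it is the correct statement and contradicts your earlier sentence.
\item The flip $\mathrm{diag}(1,-1,-1)$ maps $\CC(1,i,0)$ to $\CC(1,-i,0)$, hence $\bb$ to $\overline{\bb}$. It never carries one normal form to another, so it has no place in the stabilizer.
\end{itemize}

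For inequivalence, suppose $F=\Theta_\omega\circ R_C\circ\xi_\lambda$ satisfies $F_\CC(\mm_\phi)=\mm_{\phi'}$. Then $C\bb C^{-1}=\bb$, which forces $C$ to be a rotation about the $e_3$-axis; such $C$ fixes $ie_3+e^{i\phi}e_6$. The map $\Theta_\omega$ leaves the $e_6$-coefficient of the lift of $ie_3$ unchanged modulo $\hh$, and $\xi_\lambda$ multiplies it by $\lambda$. Hence $e^{i\phi'}=\lambda e^{i\phi}$ with $\lambda\in\RR^*$, and $\cos\phi,\cos\phi'>0$ force $\lambda=1$, so $\phi'=\phi$.

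In part b) you also need the check $[f_1,f_2]=0$, which is immediate. With these repairs your argument is complete and agrees with the paper's, with a better-justified normalization of $\bb$.
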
	
\begin{proof}
	According to Lemma \ref{le:dimensions} we can suppose that $\operatorname{dim} _\CC \bb  = 2.$ Up to inner automorphisms of $\sl2c $ the unique $2$-dimensional subalgebra $\bb\leq \sl2c $ is the Borel subalgebra, that we now describe. 

\noindent	
	Since $\sl2c \cong so(3,\CC)$ we take basis 
	\begin{equation}
H = \begin{pmatrix}
	0 & -i & 0\\
	i & 0 & 0\\
	0 & 0& 0
	\end{pmatrix}, \quad
E = \begin{pmatrix}
	0 & 0 & i\\
	0 & 0 & -1\\
	-i & 1& 0
\end{pmatrix}, \quad
F = \begin{pmatrix}
	0 & 0 & i\\
	0 & 0 & 1\\
	-i & -1& 0
\end{pmatrix},
	\end{equation}	
that spans  $\sl2c   = \operatorname{span}_\CC (H,E,F).$ 
In this basis the brackets of $\sl2c $ are
\begin{equation}
	[H,F] = -F, \quad [H, E] = E, \quad [E,F ] = 2 H.
\end{equation}
We take the Borel subalgebra
$$
\bb = \operatorname{span}_\CC (H,E).
$$

The algebra $\hh = \mm \cap \CC$ is $1$-dimensional so we suppose $\hh = \CC v',$ $v' = (0, v_+).$ Since $\hh$ is ideal in $\mm$, it is preserved under the action of $\bb,$ so we calculate  $v_+$ as common eigenvector of  $H, E$ and obtain $v_+ = (-i,1,0).$ 
Moreover, we take basis $(v_+, v_-, v_0)$ of $\CC ^3$ with $v_- = (i,1,0),$ $v_0 = (0,0,1).$ The action of $\sl2c $ on that basis is:
\begin{align}
	\nonumber
	Hv_+ & = v_+, \quad \quad 	Hv_- = -v_-, \quad 	Hv_0 = 0, \\
		\label{eq:dejstvo}
		Ev_+ &= 0, \quad \quad \enskip	Ev_- = 2 v_0, \quad 	Ev_0 = -v_+, \\
		\nonumber
			Fv_+ &= -2v_0, \quad 	Fv_- = 0, \quad \quad 	Fv_0 = v_-. 
\end{align}
Subalgebra $\mm$ is spanned by 
$$
\mm = \operatorname{span}_\CC ( v', H', E' )
$$
where 
$$
 H' = (H, v_H), \quad E' = (E, v_E), 
$$ 
for some $v_H, v_E \in \CC ^3,$ are  lifts of $H$ and $E.$ They are subject to the condition that $\mm$ is subalgebra:
\begin{align*}
	[H', v'] &= [(H, v_H), (0, v_+)] = (0, Hv_+) = (0, v_+) = v' \in \mm,\\
	[E', v'] &= [(E, v_E), (0, v_+)] = (0, Ev_+) = (0, 0) = 0 \in \mm,\\
	[H', E'] &= [(H, v_H), (E, v_E)] = (E, Hv_E-Ev_H)\in \mm.
\end{align*}
The first two conditions are fulfilled and from the third, for some $\alpha, \beta , \gamma \in \CC$  
$$
[H', E'] = \alpha v' + \beta H' + \gamma E' = (\beta H + \gamma E, \alpha v_+ + \beta v_H + \gamma v_E).
$$
But $\pi( [H', E']) = [\pi (H'), \pi (E')] = [H, E] = E,$ so $\beta H + \gamma E = E$ and therefore $\beta = 0, \gamma = 1.$
Additionaly, the condition
\begin{equation}
	\label{eq:cond}
	Hv_E - Ev_H = \alpha v_+ + v_E
\end{equation}
has to be fulfilled. Now, for some complex coefficients we write 
$$
v_H = x\,v_+ + y\, v_- + z\,v_0, \quad v_E = p \,v_+ + q\, v_- + r\,v_0,
$$
and from  \eqref{eq:dejstvo}, \eqref{eq:cond} we obtain
\begin{equation}
v_H = x \,v_+ + y \,v_- + \alpha\, v_0, \quad v_E = p\,v_+ - 2 y \,v_0,
\end{equation}
for $x,y,\alpha, p \in \CC.$

We can change basis of $\mm$ using the automorphism $H'\to H' - x v',$ $E'\to E' - p v',$ $v'\to v'$ to further simplify $v_H$ and $v_E,$ or equivalently set $x =0=p.$
Therefore $\mm = \operatorname{span}_\CC (v', H', E'),$
$$
 v' = (0, v_+),\enskip  H' = (H, y v_- + \alpha v_0), \enskip E' = (E, -2y v_0).
$$
Now we use complexifications of automorphisms of $\e3$ from Lemma \ref{le:aut} to further simplify $\mm,$ that keeps representing the equivalent complex structure.

For vector $\omega  = (\omega _1, \omega _2, \omega _3)\in \RR ^3$ we use automorphism $\Theta _\omega (A,v) = (A, v- A\omega).$
A direct computation gives
\begin{align*}
H\omega  &= \frac{1}{2}(i\omega _1 + \omega _2) v_+ + \frac{1}{2}(i\omega _1 - \omega _2) v_-, \\
E\omega & = -\omega _3 v_+ + (-i\omega _1 + \omega _2) v_0.
\end{align*}
If we choose real vector $\omega$ such that $i\omega _1 - \omega _2 = 2 y$, and $\omega _3 \in \RR$ arbitrary, modulo $v'$ (the component that  already belongs to $\mm$), we obtain a simpler basis of $\mm:$
\begin{align*}
H'_\alpha &= \Theta _\omega (H, y v_- + \alpha v_0) = (H, \alpha v_0),\enskip \alpha \in \CC \\
E' &= \Theta _\omega (E, -2y v_0) = (E,0).
\end{align*}
Finally, we can use automorphism $\xi _\lambda  (A,v) = (A, \lambda v),$ $\lambda \in \RR, \lambda \neq 0$  to achieve $\alpha \to \lambda \alpha,$ i.e. we can suppose that $\alpha  = e^{i\phi}, \phi \in (-\frac{\pi}{2}, \frac{\pi}{2}] .$  

\noindent
The resulting subalgebra is $\mm _\phi= \operatorname{span}_\CC ( v', H_\phi ', E'),$
\begin{equation}
\label{eq:mfi} v' = (0, v_+), \enskip H_\phi' = (H, e^{i\phi}v_0), \enskip E' = (E, 0),\quad \phi \in (-\frac{\pi}{2}, \frac{\pi}{2}].
\end{equation}

We have already used two types of automorphisms of $\so3$ to simplify parameters describing subalgebra $\mm .$ It is easy to check that automorphism of the third type,  $R _C (A,v) = (CAC^{-1}, Cv), \, C\in SO(3),$ cannot change parameter $\phi.$ Therefore, all subalgebras $\mm _\phi$ are mutually non-equivalent.

It remains to check the condition $\mm _\phi \cap \overline{\mm}_\phi  = \{ 0\},$ where
\begin{equation}
\label{eq:barmfi}
\overline{\mm}_\phi  = \operatorname{span}_\CC ((0, v_-), (-H, e^{-i\phi}v_0), (-F,0)).
\end{equation}
If $x \in \mm _\phi \cap \overline{\mm}_\phi ,$ for some complex coefficients,  we obtain
$$
p (0, v_+) + q (H, e^{i \phi}v_0) + r (E,0) = x  =
p' (0, v_-) + q' (-H, e^{-i \phi}v_0) + r' (-F,0).
$$
This is equivalent to 
$$
qH + rE = -q'H - r'F, \quad pv_+ + q e^{i\phi} v_0 = p' v_- + q'e^{-i\phi} v_0.
$$
We use that $(H,E,F)$ is basis of $\sl2c $ and $(v_+, v_-, v_0)$ is basis of $\CC ^3$ to obtain nontrivial solution $x\neq 0$ if and only if $e^{i\phi} + e^{-i\phi} =0,$ i.e.  $\cos \phi =0.$ In the case $\cos \phi = 0$ we have $\mm _\phi \cap \overline{\mm}_\phi  \neq  \{ 0\}$ and $\mm _\phi$ doesn't correspond to a complex structure.
This rules out parameter $\phi = \frac{\pi}{2}.$ 

Let us  find subalgebra $\mm$ and the associated complex structure in basis \eqref{eq:basis}. Note that
$$
H = i E_3, \quad E = E_1 + iE_2, \quad F = -E_1 + iE_2,
$$
$$
(0,v_+) = -i e_4 + e_5, \quad (0,v_-) = i e_4 + e_5, \quad (0, v_0) = e_6.
$$
From \eqref{eq:mfi} and \eqref{eq:barmfi} we conclude that 
\begin{align*}
	\mm _\phi &= \operatorname{span}_\CC (-ie_4 + e_5, ie_3 + e^{i\phi}e_6, e_1 + i e_2 ),\\
	\overline{\mm} _\phi &= \operatorname{span}_\CC (ie_4 + e_5, -ie_3 + e^{-i\phi}e_6, e_1 - i e_2 )
	\end{align*}
and we obtain the statement a). 
Statement b) follows directly from a) and fact that $\mm _\phi$ is $i$-eigenspace of $J_\phi$ (see formula \eqref{eq:mj}). 
\end{proof} 

\begin{remark}
	For $\phi = 0$ we obtain complex structure from Example \ref{ex:complex}.
\end{remark}

\begin{remark}
In \cite{smolencev2025} the authors obtain four families of complex structures, but they didn't check their equivalence.   Since those four families are far from complete classification we check only $J_4$ for $\Psi_{44} = 0, \Psi_{41} = 1.$
After adjusting the base of $\e3$ by permuting $e_1 \leftrightarrow e_4, e_2 \leftrightarrow e_5, e_3 \leftrightarrow e_6$ to match commutators \eqref{eq:komutatori}, the structure is
$$
J_4e_4 = e_1, \,J_4e_1 = -e_4, \, J_4e_5 = e_6, \, J_4e_6 = -e_5,  \,J_4e_2 = e_3, \, J_4e_3 = -e_2.
$$
But  after the automorphism of $\so3$
$$
F:e_1,e_2,e_3,e_4,e_5,e_6 \mapsto e_2,e_3,e_1,e_5,e_6,e_4
$$
this is the structure $-J_\phi,$ for $\phi =0,$ from our classification.
\end{remark}

\section{Complex structures on $\e3$ as a contraction limits of Calabi-Eckmann structures}
\label{sec:calabi}

It is well known (see \cite{Nomitzu1969, Magnin2010}) that all non-eqivalent left-invariant complex structures on direct product of spheres  $S^3\times S^3,$ regarded as a Lie group, are so called Calabi-Eckmann structures.
Namely, each copy of $S^3$ is considered as Hopf bundle $S^1\to S^3 \to S^2.$ Both copies of base spaces  $S^2$ carry the natural and unique complex structure. The two-dimensional family of Calabi-Eckman structures is defined to "permute" the fibers, i.e. it is parameterized on two-dimensional torus $S^1\times S^1.$
Since, the complex structures are left-invariant, they can be described as complex structures on corresponding Lie algebra $\su2 \oplus  \su2 .$

In this section we show that the complex structures on $\e3$ from our classification are closely related to the Calabi-Eckmann complex structures on $\su2 \oplus  \su2  = \so3 \oplus  so(3).$

Let $(X_1, X_2, X_3, Y_1, Y_2, Y_3)$ be a standard basis of  $\su2 \oplus  \su2 ,$ 
\begin{align}
[X_1, X_2] &= X_3, \enskip [X_2, X_3] = X_1, \enskip [X_3,
 X_1] = X_2, \nonumber \\
 [Y_1, Y_2] &= Y_3, \enskip [Y_2, Y_3] = Y_1, \enskip [Y_3, Y_1] = Y_2, \label{eq:su2su2}\\
 [X_i, Y_j]& = 0, \enskip i,j= 1,2,3.\nonumber
\end{align}

\noindent
We also define  family of Lie algebras by the nonzero brackets: 
\begin{align}
[E_1, E_2] &= E_3, \enskip [E_2, E_3] = E_1, \enskip [E_3,E_1] = E_2, \nonumber\\
\gg _s:\quad [F_1, F_2] &= s F_3, \enskip [F_2, F_3] = s F_1, \enskip [F_3, F_1] = s F_2, \quad s\in \RR, \label{eq:komEiFi} \\
 [E_1, F_2]& = F_3, \enskip  [E_1, F_3] = -F_2, \enskip [E_2, F_1]= - F_3,  \nonumber\\
 [E_2, F_3]& =  F_1, \enskip [E_3, F_1] = F_2, \enskip  [E_3, F_2] = -F_1. \nonumber
\end{align}
There exists the following isomorphism of Lie algebras
$$
\mathfrak{g}_s \cong
\begin{cases}
 \mathfrak{su}(2)\oplus \mathfrak{su}(2), & s>0,\\[2mm]
	\e3, & s=0,\\[2mm]
	\mathfrak{so}(3,1), & s<0,
\end{cases}
$$
(see \cite{Morales2025}), where parameter $s$ corresponds to the cosmological constant in Euclidean 3D gravity.

In order to deform $\su2 \oplus  \su2 $ to $\e3 = \so3 \ltimes _\rho \RR ^3= \su2 \ltimes _\rho \RR ^3$ let us introduce (see \cite{wigner1953}) the "deformation basis":
\begin{equation}
	\label{eq:EiFi}
	E_i = X_i + Y_i, \quad F_i = t (X_i - Y_i), \enskip t \geq 0. 
\end{equation}
Starting from \eqref{eq:su2su2} the brackets of "deformation basis" \eqref{eq:EiFi} produce Lie algebra $\gg _s$ for $s = t^2,\, t\geq 0,$ so we regard $\gg _{t^2}$  as a "deformation" family from $ \mathfrak{su}(2)\oplus \mathfrak{su}(2)$ to $\e3.$

For $t  = 0$  we use the identification 
\begin{equation}
	\label{eq:identification}
(E_1, E_2, E_3, F_1, F_2, F_3) \leftrightarrow (e_1,e_2,e_3,e_4,e_5,e_6)
\end{equation}
of basis of the $\gg_0$ and $\e3$, both with the same brackets
\eqref{eq:komutatori}.

The Calabi-Eckmann structures $J_\tau$ on $\su2 \oplus  \su2 $ is (see \cite{Nomitzu1969} or \cite{Magnin2010}, Corollary 3. for details) correspond to subalgebras $\mm _\tau \leq \cpx{(\su2 \oplus \su2 )}$ given by
\begin{equation}
	\label{eq:mtau}
\mm _{\tau} = \operatorname{span}_\CC (X_1 + i X_2, Y_1 + i Y_2, X_3 + \tau Y_3), \enskip \tau \in \CC, \, \operatorname{Im} (\tau )\neq 0. 
\end{equation}

The condition $\operatorname{Im} (\tau )\neq 0$ ensures that $\mm_\tau \cap \overline{\mm}_\tau = \{0\}.$

Now we show that each complex structure $J_\phi$ on $\e3$  is  a contraction limit of Calabi-Eckmann complex structures.

\begin{theorem}
	\label{th:calabi}
a) Each  complex structure \(J_\phi\) given by \eqref{eq:jphi} is also complex structure of  the entire family of Lie algebras $\gg _{s},$ $s\in \RR,$ with Lie brackets given by \eqref{eq:komEiFi}.

b) For each fixed $\phi$ and every $t>0,$ the pair $(\gg _{t^2},J_\phi)$ is isomorphic to $\mathfrak{so}(3)\oplus\mathfrak{so}(3)$ endowed with a Calabi–Eckmann complex structure $J_\tau,$ where
\begin{equation}
	\tau=\tau_\phi (t)
	=\frac{1+it e^{i\phi}}{1-it e^{i\phi}}.
\end{equation}
As $t \to 0^+$, the Lie algebra $\mathfrak g_{t^2}$ contracts to $\e3$, and the family of pairs $(\gg_{t^2}, J_\phi)$ contracts to $(\e3 ,J_\phi).$
\end{theorem}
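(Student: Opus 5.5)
The plan is to work entirely on the level of the subalgebra $\mm_\phi$ from Theorem \ref{th:glavna} and use Lemma \ref{le:integrable}. Under the identification \eqref{eq:identification}, $J_\phi$ is the endomorphism of $\gg_s$ whose $i$-eigenspace is
$$
\mm_\phi=\operatorname{span}_\CC\bigl(f_1=E_1+iE_2,\ f_2=-iF_1+F_2,\ f_3=iE_3+e^{i\phi}F_3\bigr)\subset \cpx{\gg_s}.
$$
The condition $J_\phi^2=-\operatorname{Id}$ and the splitting $\cpx{\gg_s}=\mm_\phi\oplus\overline{\mm}_\phi$ are pure linear algebra. They do not involve the bracket, so they hold for every $s$, exactly as checked in the proof of Theorem \ref{th:glavna} (where $\cos\phi\neq 0$ was used). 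Hence for part a) it only remains to verify that $\mm_\phi$ is closed under the brackets \eqref{eq:komEiFi} for every $s\in\RR$.

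For this I will compute the three brackets using \eqref{eq:komEiFi}. The parameter $s$ only enters through $[F_i,F_j]$, so it can only show up in $[f_2,f_3]$.
\begin{itemize}
\item A direct computation should give $[f_1,f_2]=0$.
\item Likewise $[f_1,f_3]=-f_1-e^{i\phi}f_2$.
\item For the last bracket, the $E_3$-part of $f_3$ contributes $-(F_2-iF_1)$. The $F_3$-part contributes $e^{i\phi}s(F_1+iF_2)=is\,e^{i\phi}(F_2-iF_1)$. Hence $[f_2,f_3]=(is\,e^{i\phi}-1)f_2\in\mm_\phi$.
\end{itemize}
Thus $\mm_\phi$ is a subalgebra of $\cpx{\gg_s}$ for all $s$, and part a) follows from Lemma \ref{le:integrable}.

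For part b), fix $t>0$ and use the deformation basis \eqref{eq:EiFi}, which gives an explicit isomorphism $\gg_{t^2}\cong\su2\oplus\su2$. Rewriting the generators of $\mm_\phi$ in terms of $X_i,Y_i$ gives
$$
f_1=(X_1+iX_2)+(Y_1+iY_2),\qquad f_2=-it\bigl[(X_1+iX_2)-(Y_1+iY_2)\bigr].
$$
Since $t\neq0$, the span of $f_1,f_2$ equals $\operatorname{span}_\CC(X_1+iX_2,\,Y_1+iY_2)$. Moreover
$$
f_3=(i+te^{i\phi})X_3+(i-te^{i\phi})Y_3,
$$
which is proportional to $X_3+\tau Y_3$ with
$$
\tau=\frac{i-te^{i\phi}}{i+te^{i\phi}}=\frac{1+ite^{i\phi}}{1-ite^{i\phi}}.
$$
So the image of $\mm_\phi$ is exactly $\mm_\tau$ from \eqref{eq:mtau}.

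I expect the main point needing care to be showing that this $\tau$ is admissible, i.e. finite with $\operatorname{Im}\tau\neq0$. I would argue via the Möbius map $w\mapsto(1+w)/(1-w)$, which sends the real line (together with $\infty$) onto itself. Take $w=ite^{i\phi}$. Then $w$ is real iff $e^{i\phi}$ is purely imaginary iff $\cos\phi=0$, which is excluded by $\phi\in(-\frac{\pi}{2},\frac{\pi}{2})$. The same condition also guarantees $w\neq 1$, so the denominator does not vanish.

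Finally, the contraction statement follows because the structure constants of $\gg_{t^2}$ in the basis $(E_i,F_i)$ depend continuously (polynomially) on $t$ and reduce at $t=0$ to those of $\e3$. Meanwhile the matrix of $J_\phi$ in this basis is constant in $t$. Hence the pairs $(\gg_{t^2},J_\phi)\cong(\su2\oplus\su2,J_{\tau_\phi(t)})$ converge to $(\e3,J_\phi)$ as $t\to0^+$. Consistently, $\tau_\phi(t)\to1$, which is the degenerate value where the Calabi–Eckmann structures cease to exist.
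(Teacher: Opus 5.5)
Your route is the paper's: closure of $\mathfrak{m}_\phi$ under the brackets of $\mathfrak{g}_s$ for (a), and rewriting $f_1,f_2,f_3$ in the basis $X_i,Y_i$ for (b). Several of your steps are correct: $[f_1,f_2]=0$, $[f_1,f_3]=-f_1-e^{i\phi}f_2$, the expressions of $f_j$ in $X_i,Y_i$, the value of $\tau$, and your M\"obius check that $\operatorname{Im}\tau\neq0$ (which the paper leaves implicit).

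However, your two parts use two incompatible algebras. In (a) you compute with $[F_i,F_j]=s\,\epsilon_{ijk}F_k$, as printed in \eqref{eq:komEiFi}. In (b) you use that \eqref{eq:EiFi} is an isomorphism $\mathfrak{g}_{t^2}\to\mathfrak{su}(2)\oplus\mathfrak{su}(2)$. But $[t(X_1-Y_1),t(X_2-Y_2)]=t^2(X_3+Y_3)=t^2E_3$, so the deformation basis yields $[F_i,F_j]=t^2\epsilon_{ijk}E_k$, not $t^2\epsilon_{ijk}F_k$. For the bracket you used in (a), \eqref{eq:EiFi} is therefore not a homomorphism. Worse, the conclusion of (b) actually changes for that algebra:
\begin{itemize}
\item It splits as $\operatorname{span}(E_i-s^{-1}F_i)\oplus\operatorname{span}(s^{-1}F_i)\cong\mathfrak{so}(3)\oplus\mathfrak{so}(3)$ for every $s\neq0$, so it is never $\mathfrak{so}(3,1)$.
\item In that splitting $f_3$ is proportional to $X_3+(1-ise^{i\phi})Y_3$.
\item For $\phi=0$, $s=t^2$ this parameter has modulus $\sqrt{1+t^4}\neq1$, so it is neither $\tau_\phi(t)$ nor $1/\tau_\phi(t)$.
\end{itemize}
So the bracket printed in \eqref{eq:komEiFi} is a misprint. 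It must be read as $[F_1,F_2]=sE_3$, $[F_2,F_3]=sE_1$, $[F_3,F_1]=sE_2$, which is the standard family and what the paper's proof of (b) tacitly uses.

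With the corrected bracket, the repair to (a) is one line. Your $[f_1,f_2]$ and $[f_1,f_3]$ involve only $[E,E]$ and $[E,F]$ brackets, so they are unchanged. The $E_3$-part of $[f_2,f_3]$ is still $-f_2$. The $F_3$-part becomes $e^{i\phi}\bigl(-i[F_1,F_3]+[F_2,F_3]\bigr)=se^{i\phi}(E_1+iE_2)=se^{i\phi}f_1$. Hence $[f_2,f_3]=-f_2+se^{i\phi}f_1\in\mathfrak{m}_\phi$ for every $s\in\RR$. Part (a) then holds for the correct family, and your argument for (b), including the contraction remark, goes through verbatim and matches the paper's.
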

\begin{proof} a) Recall that  family of  structures $J_\phi, \, \phi \in (-\frac{\pi}{2},\frac{\pi}{2})$  correspond to the family of complex subalgebras  $\mm _\phi \leq \cpx{\e3}$  given by \eqref{eq:mphi}.
	
	Using identification \eqref{eq:identification} we obtain that the same endomorphisms $J_\phi$ on $\gg _s$  correspond to a subspace  
	\begin{equation}
		\label{eq:tildem}
	 \tilde \mm _\phi := \operatorname{span}_\CC (E_1 + i E_2, -iF_1+ F_2, i E_3 + e^{i\phi} F_3 ) \subset \cpx{\gg_s}. 
	\end{equation}
	It is straightforward to prove that  $\tilde \mm _\phi$ is complex subalgebra of $\gg _s$ and hence  $J_\phi$ are complex structures on $\gg _s,\, s\in \RR.$
	
	b) To contract Calabi-Eckmann structure $J_\tau$ to $J_\phi$ we compare corresponding subalgebras $\mm_\tau$ and $\tilde \mm _\phi$ given by \eqref{eq:mtau} and \eqref{eq:tildem}. Since
\begin{align}
	\label{eq:veza}
 E_1 + i E_2 &= X_1 +  Y_1 + i(X_2 +  Y_2) = (X_1 + i X_2) +  (Y_1 + i Y_2),\\
 \nonumber
 F_1 + i F_2 &= t   (X_1 -  Y_1) + it   (X_2 -  Y_2) = t  (X_1 + i X_2) - t  (Y_1 + i Y_2),
\end{align}
we see the subalgebras $\mm _\tau$ and $\tilde \mm _\phi$ are the same, i.e. determine the same complex structure if and only if
$$
i E_3 + e^{i\phi} F_3 = \mu (X_3 + \tau Y _3),
$$
for some $\mu \in \CC, \,\mu \neq 0.$ Using \eqref{eq:EiFi}, we eliminate $\mu$ to obtain
\begin{equation}
	\label{eq:taut}
\tau  = \tau _\phi (t ) := \frac{1+ie^{i\phi}t }{1-ie^{i\phi}t }.
\end{equation}
\noindent
For \(\tau=\tau_\phi(t)\), equations \eqref{eq:mtau} and \eqref{eq:veza} show that the complex subalgebra \(\mathfrak m_\tau\) defining the Calabi-Eckmann structure \(J_\tau\) coincides with the complex subalgebra $\tilde \mm _\phi$ defining \(J_\phi\). Since a complex structure is uniquely determined by its \(i\)-eigenspace in the complexified Lie algebra, it follows immediately that
$J_{\tau_\phi(t)}=J_\phi.$

Thus, although the Calabi-Eckmann parameter \(\tau_\phi(t)\) depends on \(t\), the corresponding complex structure, expressed on the fixed underlying vector space in the basis \((E_1,E_2,E_3,F_1,F_2,F_3)\), is independent of \(t\) and is precisely the structure \(J_\phi\) given by \eqref{eq:jphi}.
\end{proof}

\begin{corollary} 
	\label{cor:so31}
	Structures  \(J_\phi\) given by \eqref{eq:jphi} are complex structures on the Lie algebra $\mathfrak{so}(3,1)$. There exists a complex structure $J_\RR$  on  $\mathfrak{so}(3,1)$ that does not belong to the family $J_\phi.$
	\end{corollary}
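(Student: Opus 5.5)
The first claim follows from Theorem \ref{th:calabi} a) together with the isomorphism $\gg_s\cong\mathfrak{so}(3,1)$ for $s<0$. The subspace $\tilde\mm_\phi$ from \eqref{eq:tildem} is a complex subalgebra of $\cpx{\gg_s}$ for every $s\in\RR$. The condition $\tilde\mm_\phi\cap\overline{\tilde\mm}_\phi=\{0\}$ is purely linear and does not depend on $s$, and it was verified in Theorem \ref{th:glavna} for $\cos\phi\neq 0$. Taking, say, $s=-1$, Lemma \ref{le:integrable} then shows that $J_\phi$ is a complex structure on $\gg_{-1}\cong\mathfrak{so}(3,1)$. To make this self-contained, I would recheck the only bracket in which $s$ appears: $[-iF_1+F_2,\, iE_3+e^{i\phi}F_3]$. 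Its $E$-part gives $-i F_2\cdot(\ldots)$ and $F_1$-terms proportional to $-iF_1+F_2$. Its $s$-part is $s e^{i\phi}(-i[F_1,F_3]+[F_2,F_3])=se^{i\phi}(iF_2+F_1)=se^{i\phi}\,i(F_2-iF_1)$, which again lies in $\CC(-iF_1+F_2)$. The bracket $[E_1+iE_2,-iF_1+F_2]$ has no $s$-dependence, and $[E_1+iE_2,\,iE_3+e^{i\phi}F_3]$ has none either.

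For the second claim, the natural candidate is the structure coming from the fact that $\mathfrak{so}(3,1)\cong\sl2c$ viewed as a real Lie algebra. On a complex Lie algebra, multiplication by $i$, which I denote $J_\RR$, is an integrable (bi-invariant) complex structure. In the basis $(E_i,F_i)$ with $s=-1$ we may take $E_j\leftrightarrow$ rotations and $F_j\leftrightarrow$ boosts, and then $J_\RR E_j=F_j$, $J_\RR F_j=-E_j$; one checks $[E_j,F_k]=J_\RR[E_j,E_k]$ and $[F_j,F_k]=-[E_j,E_k]$, which is exactly $s=-1$. Its $i$-eigenspace is $\operatorname{span}_\CC(E_j-iF_j)$, which is an ideal (one of the two $\sl2c$ factors of $\cpx{\mathfrak{so}(3,1)}\cong\sl2c\oplus\sl2c$).

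It remains to show that $J_\RR$ is not equivalent to any $J_\phi$. The key invariant is that $\mm_{J_\RR}$ is an ideal of $\cpx{\gg}$ and is isomorphic to $\sl2c$, so it is semisimple; equivalently, $J_\RR$ commutes with all $\operatorname{ad}_X$. By contrast, $\tilde\mm_\phi$ is solvable, since it is a $3$-dimensional algebra containing the ideal $\CC(-iF_1+F_2)$ with a $2$-dimensional quotient. Solvability of $\mm_J$ is preserved by complexified automorphisms, so no automorphism maps $\tilde\mm_\phi$ onto $\mm_{J_\RR}$. This completes the plan.

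The main obstacle is mostly bookkeeping: verifying that the convention $J_\RR E_j=F_j$ matches the sign conventions of \eqref{eq:komEiFi} at $s=-1$. If the signs are off, I would use $J_\RR E_j=-F_j$ instead, which is equally integrable.
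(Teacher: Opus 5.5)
Your first claim is handled as in the paper: Theorem \ref{th:calabi} a) plus $\gg_s\cong\mathfrak{so}(3,1)$ for $s<0$. Your $J_\RR$ (multiplication by $i$ on $\mathfrak{so}(3,1)\cong\mathfrak{sl}(2,\CC)$ viewed as a real algebra) is also the paper's choice.

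\textbf{Non-equivalence: a different route.} The paper shows directly that $J_\phi$ is not bi-invariant: $J_\phi[E_1,E_2]=J_\phi E_3=\frac{\sin\phi}{\cos\phi}E_3+\frac{1}{\cos\phi}F_3\neq 0=[J_\phi E_1,E_2]$. Since bi-invariance survives conjugation by automorphisms, this rules out equivalence with $J_\RR$. You instead compare the $i$-eigenspaces as abstract Lie algebras: $\mm_{J_\RR}\cong\mathfrak{sl}(2,\CC)$ is semisimple, while $\tilde{\mm}_\phi$ is solvable. The paper's check is a one-line computation in a fixed basis. Yours is basis-free and proves slightly more, namely that $\tilde{\mm}_\phi$ and $\mm_{J_\RR}$ are not even isomorphic as complex Lie algebras.

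\textbf{Inconsistent brackets.} Your computations use two incompatible brackets for $\gg_s$. In your first paragraph, and in the claim that $\CC(-iF_1+F_2)$ is an ideal, you use $[F_i,F_j]=s\epsilon_{ijk}F_k$ as printed in \eqref{eq:komEiFi}. Your identification $J_\RR E_j=F_j$, however, needs $[F_1,F_2]=-E_3$ at $s=-1$. No sign change of $J_\RR$ reconciles that with $[F_1,F_2]=-F_3$, so the issue is not a sign.

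In fact, with the printed bracket $\gg_s\cong\mathfrak{so}(3)\oplus\mathfrak{so}(3)$ for every $s\neq 0$. The elements $G_i=E_i-F_i/s$ commute with all $F_j$ and satisfy $[G_i,G_j]=\epsilon_{ijk}G_k$. So the printed bracket cannot be the intended one. The deformation basis $F_i=t(X_i-Y_i)$ actually produces $[F_i,F_j]=s\epsilon_{ijk}E_k$; for example, $[F_1,F_2]=t^2E_3$. Only this bracket gives $\gg_s\cong\mathfrak{so}(3,1)$ for $s<0$, and it is the one your $J_\RR$ check implicitly uses.

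\textbf{What changes with the correct bracket.}
\begin{itemize}
\item The $s$-dependent part of $[-iF_1+F_2,\,iE_3+e^{i\phi}F_3]$ becomes $se^{i\phi}(E_1+iE_2)\in\tilde{\mm}_\phi$. So the first claim still holds.
\item $\CC(-iF_1+F_2)$ is no longer an ideal of $\tilde{\mm}_\phi$.
\item Solvability still holds. Set $a=E_1+iE_2$, $b=-iF_1+F_2$, $c=iE_3+e^{i\phi}F_3$. Then $[a,b]=0$, $[a,c]=-a-e^{i\phi}b$ and $[b,c]=-b+se^{i\phi}a$. Hence $\operatorname{span}_\CC(a,b)$ is an abelian ideal of codimension one.
\end{itemize}
With the brackets made consistent in this way, your argument is complete.
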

\begin{proof}
	In Theorem \ref{th:calabi} a) we proved that the structures $J_\phi$ are complex structures on Lie algebras $\gg _s$ for $s<0$, which is isomorphic to $so(3,1).$ 
	
	Since algebra $so(3,1)  = \sl2c _\RR$ is realification of complex Lie algebra $\sl2c $  it carries the natural complex structure $J_\RR$ obtained as multiplication by $i.$
Therefore,  $J_\RR$ is bi-invariant, i.e. $J_\RR [X, Y] = [J_\RR X, Y]$ holds for all $X,Y \in so(3,1).$

For any complex structure $J_\phi, \, \phi \in (-\frac{\pi}{2}, -\frac{\pi}{2})$ take $X = E_1,$ $Y = E_2.$ Then 
\begin{align*}
J_\phi [X,Y] &= J_\phi [E_1, E_2] = J_\phi E_3 = \frac{\sin \phi}{\cos \phi}E_3 + \frac{1}{\cos \phi} F_3,\\
[J_\phi X, Y] &= [J_\phi E_1, E_2] = [-E_2, E_2] = 0,
\end{align*}
that is $J_\phi [X,Y]\neq [J_\phi X, Y]$ and therefore $J_\phi$ is not bi-invariant. Since $J_\RR$ is bi-invariant and $J_\phi,  \, \phi \in (-\frac{\pi}{2}, \frac{\pi}{2})$ are not bi-invariant, they are not equivalent.
\end{proof}	

\begin{remark}
In paper \cite{Barberis2004} 
the authors showed that a regular invariant complex structure on $\sl2c\cong \mathfrak{so}(3,1)$ remains integrable under the classical deformation to the Euclidean motion algebra $\e3 .$ It is easy to check that the structure they found on  algebra $\gg _s, s<0$ is exactly our $J_0,$ i.e. obtained for $\phi = 0.$   
\end{remark}

\begin{figure}[h!]
	\centering
	\IfFileExists{krugovi.png}{\includegraphics[width=0.8\textwidth]{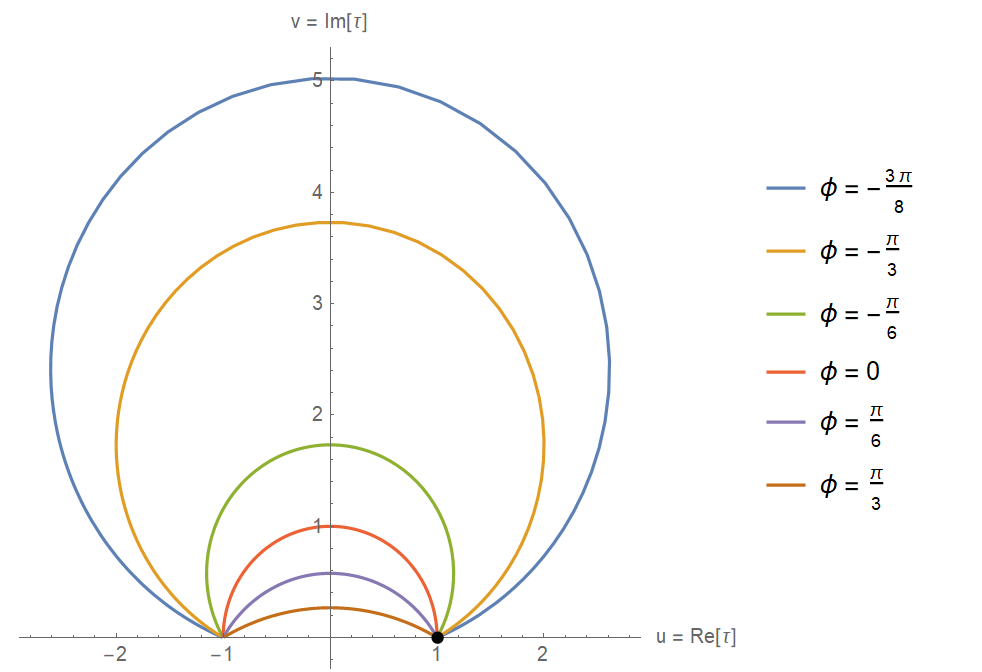}}{\fbox{\parbox{0.75\textwidth}{\centering Figure file \texttt{krugovi.png} not supplied.}}}
	\caption{Calabi-Eckmann structures $J_\tau$  deforming to same $J_\phi$}
	\label{fig:moduli}
\end{figure}

Now we visualize the contraction of Calabi-Eckmann structures.
The moduli space of Calabi-Eckmann complex structures  on $\mathfrak{so}(3)\oplus \mathfrak{so}(3)$ is $2$-dimensional and described in \eqref{eq:mtau} by parameter $\tau \in \CC,\, \operatorname{Im} (\tau) \neq 0.$
One can check that $\tau $ and $\frac{1}{\tau}$ and only them, represent the same structure on $\mathfrak{so}(3)\oplus \mathfrak{so}(3).$
Therefore, the moduli space of invariant complex structures on $\mathfrak{so}(3)\oplus \mathfrak{so}(3)$ is upper complex half plane.

Moreover, from the formula \eqref{eq:taut} we calculate $\tau _\phi (t)  = \frac{1}{\tau_ {\phi + \pi}(t)}.$ This corresponds to the fact that $J_\phi$ and $J_{\phi + \pi}$ are equivalent structures on $\e3 .$ 

The space of non-equivalent complex structures on $\e3$  is one-dimensional. In  Figure \ref{fig:moduli} Calabi-Eckmann structures $J_{\tau _\phi (t)}, t>0$, for fixed $\phi,$ are represented by circles passing through $-1$ and $1.$ They  contract to the same structure $J_\phi $ on $\e3$ as $t\to 0^+.$

\end{document}